\newtheorem{thm}{Theorem}[section]
\newtheorem{corollary}[thm]{Corollary}
\newtheorem{remark}[thm]{Remark}
\newtheorem{lemma}[thm]{Lemma}
\DeclareMathOperator*{\argmax}{arg\,max}
\DeclareMathOperator*{\E}{\mathbb{E}}
\begin{document}
\title{Merton's optimal investment problem with jump signals}
\thanks{P.~Bank is supported in part by the
GIF Grant 1489-304.6/2019 and by Deutsche Forschungsgemeinschaft through IRTG 2544. \\
L.~Körber is supported by Deutsche Forschungsgemeinschaft through IRTG 2544. }

   \author{Peter Bank \address{
 Department of Mathematics, TU Berlin. \\
 e.mail: bank@math.tu-berlin.de }}
    \author{Laura Körber \address{
 Department of Mathematics, TU Berlin. \\
 e.mail: koerber@math.tu-berlin.de }}
\date{\today}

\keywords{}
 \maketitle \markboth{Peter Bank and Laura Körber}{Optimal investment with jump signals}
\begin{abstract}
This paper presents a new framework for Merton's optimal investment problem which uses the theory of Meyer $\sigma$-fields to allow for signals that possibly warn the investor about impending jumps. With strategies no longer predictable, some care has to be taken to properly define wealth dynamics through stochastic integration. By means of dynamic programming, we solve the problem explicitly for power utilities. In a case study with Gaussian jumps, we find, for instance, that an investor may prefer to disinvest completely even after a mildly positive signal. Our setting also allows us to investigate whether, given the chance, it is better to improve signal quality or quantity and how much extra value can be generated from either choice.
\end{abstract}
\begin{description}
\item[Mathematical Subject Classification (2020)] 91G10, 91B16, 60H05 
\item[Keywords] Optimal investment, jump signal, Meyer $\sigma$-field, stochastic integration for non-predictable integrands
\end{description}

\section{Introduction}

In continuous-time stochastic optimal control problems for systems with exogenous jumps, the standard approach is to assume that the controller can only react to these afterwards, making the class of predictable policies a natural choice; see, e.g., \cite{oksendalsulem}. In many applications, though, one can readily imagine that such jumps are not coming as a complete surprise, but that beforehand the controller receives some information about their imminence and size, affording her the opportunity to pro-actively intervene. 

The present paper illustrates the possibilities and scope of such signal-driven policies in the classical optimal investment problem with jumps, going back to \cite{Merton71}. Specifically, we will assume that the stock price is driven by a L{\'e}vy process, but, contrary to the standard, purely predictable case, we will allow that, whenever there is a jump, say at time $t$, the investor may receive a signal $\Delta Z_t\not=0$, for instance, on its size. Given this signal, the investor can then adjust her holdings to be best prepared for the price jump $\Delta S_t$. 

A mathematically rigorous description of the resulting class of policies is made possible by the Meyer $\sigma$-field $\Lambda = \mathcal{P}\vee \sigma(Z)$ obtained as the augmentation of the usual predictable $\sigma$-field $\mathcal{P}$ by the $\sigma$-field generated by the signal process $Z$. We will clarify the formal structure of admissible policies and rigorously establish wealth-dynamics by constructing stochastic integrals beyond the predictable integrands classically considered. We will also show how these dynamics allow one to derive the Hamilton-Jacobi-Bellman (HJB) equation for the corresponding stochastic control problem. As one would expect, it takes the form of an integro-partial differential equation. A novel feature in our HJB-equation is the occurrence of an iterated integral that straddles a sup-operator in reflection of the controller's ability to react to a dynamically generated signal. 

As in Merton's classical paper, an explicit solution of the HJB-equation is possible for power utility functions. Here, optimal investment policies turn out to hold a constant fraction of wealth to invest in the stock when there is no signal; upon reception of a signal, say $\Delta Z_t=z\not=0$, though, the investor adjusts her position to take advantage of the impending jump in a way that crucially depends on what this signal entails for the stock price. The respectively optimal investment fractions are described by simple one-dimensional concave optimization problems. 

In a case study, we consider an investor who, with some probability $p$, will receive a signal of the imminent price jump specified through a correlated pair of jointly normally distributed random variables.  Here, the optimal investment fractions with and without a signal can be computed numerically. It turns out, for instance, that the investor will almost completely disinvest even given a mildly positive signal. This can be viewed as a hedge against an impending jump motivated by the investor's risk aversion. We also compute the maximal expected utility and discuss how it depends on both the probability $p$ of receiving a signal when there is a jump and on the correlation $\rho$ between such a signal and the jump shock. For a typical degree of risk aversion, we find that improvements in signal quality (higher correlation $\rho$) have a similar impact as improvements in signal quantity (higher signal probability $p$); for high risk aversion, however, this trade-off becomes skewed.

Since Merton's celebrated seminal work \cite{Merton71}, continuous-time optimal investment problems have been widely studied in the literature. Rather than embarking on the impossible task of reviewing all what has been done there, let us highlight a few directions which we consider closest to our efforts here. One way to interpret our jump signal setting is that our investor has access to privileged ``inside'' information, but can trade freely without impacting the underlying stock price. Seminal papers in this direction of ``small insiders'' include \cite{PikovskyKaratzas:96} and \cite{AmendingerImkellerSchweizer:98}, but there the inference problem is due to an initial enlargement of the investor's information flow, not a dynamic one as here; as a result, their inference problem is much more delicate than ours. Efforts to endogenize the impact on stock prices by the insider's trades have spurred a tremendous amount of work starting with \cite{Kyle:85, Back:92}; see \cite{EkrenEtAl:21} for a recent connection of this equilibrium problem to optimal transport and for further references. 

The use of Meyer $\sigma$-fields in stochastic optimization has been pioneered in the general theory of optimal stopping due to~\cite{ElKaroui:81}. Meyer $\sigma$-fields were introduced by~\cite{Lenglart} who used them to unify Meyer's treatment of his predictable and optional section and projection theorems. In a singular stochastic control setting, \cite{bankbesslichmodelling} use Meyer $\sigma$-fields in a general study of irreversible investment problems and in an explicitly worked out Poissonian case study. Their approach is through convex analytic methods though, making the present paper, to the best of our knowledge, the first to use dynamic programming methods in this context.

The paper is structured as follows: Section~\ref{sec:wealthprocesses} introduces the asset price dynamics, clarifies the structure of our Meyer-strategies and then proceeds to specify the induced wealth-dynamics by developing a suitable stochastic integral. Section~\ref{sec:utilitymaximization} then uses these dynamics to formulate an optimal investment problem with jump signals, to derive the corresponding Hamilton-Jacobi-Bellman equation and then discusses its solution in the case of power utilities. It concludes the paper with a numerical case study for Gaussian jumps and signals.

\section{Wealth processes with jump signals}~\label{sec:wealthprocesses}

In this section we will describe the financial market in which our investor operates as well as the information flow on which she can base her investment choices. Specifically, we will reach beyond the classical predictable case and allow her to draw on a signal process which reveals information on impending jumps. This will lead to signal dependent position limits ensuring nonnegative wealth and, more fundamentally, to the definition of stochastic integrals for certain non-predictable integrands in order to properly describe wealth dynamics in the first place.

\subsection{The financial market and wealth dynamics for predictable strategies}
We consider an investor with initial capital $x_0>0$ who, over some finite period $[0,T]$ wants to invest in a market with two assets, a riskless bond $S^0$ bearing interest at the constant rate $r \in \mathbb{R}$ and a stock $S^1$. The stock's returns are driven both by a standard Brownian motion~$W$ and a homogeneous Poisson measure~$N$ (cf.\ Def.\ II.1.20 in \cite{jacodshiryaev}) on a Polish mark space $(E,\mathcal{E})$ with $\sigma$-finite intensity measure $dt \otimes \nu(de)$. The drivers $W$ and $N$ are independently defined on the joint probability $(\Omega, \mathcal{A},\mathbb{P})$ and yield the model's (right-continuous) filtration $(\mathcal{F}_t)$ via the completions of $\sigma((W_s, \;N([0,s] \times A))_{s \in [0,t],\ A\in \mathcal{E}})$, $t \in [0,T]$. The asset price dynamics in this paper are specified by
\begin{equation}\label{assetdynamics}
\begin{aligned}
S^0_0=s^0>0, \quad dS^0_t&=rS^0_{t-}dt, \quad t\in[0,T]\\
S^1_0=s^1>0, \quad dS^1_t&=S^1_{t-}\left[\kappa dt+\sigma dW_t+\int_E \eta(e)\tilde{N}(dt,de)\right],\quad t\in[0,T].
\end{aligned}
\end{equation}
Here, $\kappa\in\mathbb{R}$ is considered as a drift in stock prices and $\sigma>0$ is their constant volatility; the measurable mapping $\eta:E \to \mathbb{R}$ specifies how a mark $e\in E$ set by the Poisson measure $N$ translates into a return jump $\eta(e)$ in the stock. The random measure $\tilde{N}(dt,de) := N(dt,de)-1_{\{|\eta|\leq 1\}}(e)dt \otimes \nu(de)$ compensates the small jumps in just the right way to allow us to ensure by standard arguments the existence of a unique strong solution to~\eqref{assetdynamics} under the standing  assumption that
\begin{align}\label{etasquareintegrable}
    \int_E \left(\eta(e)^2 \wedge 1\right) \nu(de)<+\infty.
\end{align}
Assuming in addition that
\begin{align}\label{etalowerbound}
    \nu(\{\eta < -1\})=0
\end{align}
will ensure that the process $S^1$ remains nonnegative, as customary for stock prices.


It will be convenient to describe the investor's investment strategies by the dynamically adjustable proportion $\tilde{\phi}=(\tilde{\phi}_t)$ of her wealth she is investing in the risky asset. The investor's resulting wealth $X^{\tilde{\phi}}$ will then evolve according to
\begin{equation}\label{wealthdynamics1}
\begin{aligned}
X^{\tilde{\phi}}_0&=x_0, \\ dX^{\tilde{\phi}}_t&=(1-\tilde{\phi}_t)X^{\tilde{\phi}}_{t-}\frac{dS^0_t}{S^0_{t-}}+\tilde{\phi}_tX^\phi_{t-}\frac{dS^1_t}{S^1_{t-}}=X^{\tilde{\phi}}_{t-}dR^{\tilde{\phi}}_t
, \quad t \in [0,T].
\end{aligned}
\end{equation}
for the cumulative return process
\begin{align}
    R_t^{\tilde{\phi}} = \int_0^t(r+\tilde{\phi}_s(\kappa-r))ds+\int_0^t\sigma \tilde{\phi}_s dW_s+\int_{[0,t]\times E} \tilde{\phi}_s \eta(e)\tilde{N}(ds,de).
\end{align}
It is well-known that these dynamics will have a unique solution for $\tilde{\phi}$ which are measurable with respect to the predictable $\sigma$-field $\mathcal{P}$ generated by the continuous $(\mathcal{F}_t)$-adapted processes and which satisfy the integrability condition
\begin{align}
    \int_0^T \tilde{\phi}^2_s ds<+\infty.
\end{align}
In order to understand when wealth processes remain nonnegative throughout for such strategies, let us note first that without jumps, i.e.\ if $\nu(\{\eta\not=0\})=0$, the investor can take any position in
\begin{align}
    \Phi:=\mathbb{R}.
\end{align}
With jumps, i.e.\ in case $\nu(\{\eta\not=0\})>0$, we observe that the jump bounds
\begin{align}
    \underline{\eta}:=\nu\text{-ess inf } \eta \geq -1  \text{ and } \overline{\eta}:=\nu\text{-ess sup } \eta 
\end{align}
allow us to describe the set of positions ensuring nonnegative wealth by
\begin{equation}\label{AdmSet_Predicable}
\Phi:=\begin{cases}\big[-1/\overline{\eta}\;,\;-1/\underline{\eta}\big],&\quad \text{if }\underline{\eta}<0<\overline{\eta}, \\
\big[-1/\overline{\eta}\;,\;+\infty\big),&\quad \text{if }0\leq\underline{\eta}\leq\overline{\eta},\\
\big(-\infty\;,\;-1/\underline{\eta}\big],&\quad \text{if }\underline{\eta}\leq\overline{\eta}\leq 0.
\end{cases} 
\end{equation}
The natural class of admissible predictable strategies is therefore
\begin{align}
    \mathcal{A}:=\left\{ \tilde{\phi} \text{ predictable with } \int_0^T \tilde{\phi}_t^2dt<+\infty \text{ and } \tilde{\phi}_t \in \Phi, \;t\in [0,T], \text{ a.s.}\right\}.
\end{align}

\subsection{Strategies with jump signals}

It is the aim of this paper to investigate how extra information on the stock price jumps will affect investments. Specifically, we consider an investor who can follow strategies that can also use a signal on impending jumps as given by the process
\begin{align}
    Z_t = \int_{[0,t] \times E} \zeta(e) \tilde{N}(ds,de), \quad t \in [0,T],
\end{align}
where $\zeta:E \to \mathbb{R}$ is measurable with $\int_E (\zeta(e)^2 \wedge 1) \nu(de)<\infty$. So, rather than merely being predictable, the investor's strategies will be allowed to be measurable with respect to the larger $\sigma$-field 
\begin{align}
    \Lambda := \mathcal{P} \vee \sigma(Z).
\end{align}
\begin{remark}
This is an example of what is known as a Meyer $\sigma$-field as introduced by \cite{Lenglart}; see \cite{bankbesslichlenglarts} for a brief survey. Depending on how much the signal value $\zeta(e)$ reveals about its pre-image $e$ from $E$, the field $\Lambda$ can be used to interpolate rather flexibly between the predictable $\sigma$-field $\mathcal{P}$ (no extra information beyond the standard framework, $\zeta \equiv 0$) and its optional counterpart $\mathcal{O}$ generated by the adapted càdlàg processes (conveying full information about jumps as they happen, $\zeta\equiv \eta$). 
\end{remark}

The strategic possibilities beyond predictable investment choices are made more tangible by the following lemma.

\begin{lemma}\label{LemmaRepresentation} A real-valued stochastic process $\tilde{\phi}$ is measurable with respect to the Meyer $\sigma$-field $\Lambda=\mathcal{P}(\mathcal{F})\vee \sigma(Z)$ if and only if it can be written in the form 
\begin{align}
\tilde{\phi}_t(\omega)=\phi_t(\omega,\Delta_t Z(\omega)), \quad (t,\omega) \in [0,T] \times \Omega,
\end{align} 
for some $\mathcal{P}(\mathcal{F})\otimes \mathcal{B}(\mathbb{R})$-measurable field $\phi:[0,T]\times \Omega\times \mathbb{R}\rightarrow \mathbb{R}$.
\end{lemma}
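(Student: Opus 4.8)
The plan is to prove the two implications separately, handling the ``if'' direction by a direct composition argument and the ``only if'' direction by a functional monotone class theorem. The structural fact driving everything is the decomposition $Z_t=Z_{t-}+\Delta_t Z$, in which $Z_{t-}$ is left-continuous and adapted, hence $\mathcal{P}(\mathcal{F})$-measurable, so that the only genuinely new ingredient carried by $\sigma(Z)$ over the predictable $\sigma$-field is the current jump $\Delta_t Z$.

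For the \emph{if} direction, suppose $Y_t(\omega)=h_t(\omega,\Delta_t Z(\omega))$ with $h$ being $\mathcal{P}(\mathcal{F})\otimes\mathcal{B}(\mathbb{R})$-measurable. Since $Z$ is càdlàg and adapted it is $\sigma(Z)$-measurable, hence $\Lambda$-measurable, while $Z_-$ is predictable and therefore also $\Lambda$-measurable; consequently $\Delta Z=Z-Z_-$ is $\Lambda$-measurable. The map $\Theta:(t,\omega)\mapsto(t,\omega,\Delta_t Z(\omega))$ is thus measurable from $([0,\infty)\times\Omega,\Lambda)$ to $([0,\infty)\times\Omega\times\mathbb{R},\mathcal{P}(\mathcal{F})\otimes\mathcal{B}(\mathbb{R}))$, because its first two coordinates are $\Lambda$-measurable (as $\mathcal{P}(\mathcal{F})\subseteq\Lambda$) and its last coordinate is $\Lambda$-measurable by the above. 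Composing with the $\mathcal{P}(\mathcal{F})\otimes\mathcal{B}(\mathbb{R})$-measurable field $h$ shows that $Y=h\circ\Theta$ is $\Lambda$-measurable.

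For the \emph{only if} direction, let $\mathcal{H}$ denote the vector space of bounded processes $Y$ admitting a representation $Y_t(\omega)=h_t(\omega,\Delta_t Z(\omega))$ for some $\mathcal{P}(\mathcal{F})\otimes\mathcal{B}(\mathbb{R})$-measurable field $h$. I would first exhibit a multiplicative class that generates $\Lambda$ and lies in $\mathcal{H}$: for bounded predictable $V$ and $g\in C_b(\mathbb{R})$, the products $V\,g(Z)$ are multiplicative, since $(V_1 g_1(Z))(V_2 g_2(Z))=(V_1V_2)(g_1g_2)(Z)$, and together they generate $\mathcal{P}(\mathcal{F})\vee\sigma(Z)=\Lambda$ (take $g\equiv 1$ to recover $\mathcal{P}(\mathcal{F})$ and $V\equiv 1$ to recover $\sigma(Z)$). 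Each such product lies in $\mathcal{H}$ via the field $h_t(\omega,z):=V_t(\omega)\,g(Z_{t-}(\omega)+z)$, which is $\mathcal{P}(\mathcal{F})\otimes\mathcal{B}(\mathbb{R})$-measurable because $V$ and $Z_-$ are predictable and $g$ is continuous, and which satisfies $h_t(\omega,\Delta_t Z(\omega))=V_t(\omega)g(Z_{t-}(\omega)+\Delta_t Z(\omega))=V_t(\omega)g(Z_t(\omega))$. As $\mathcal{H}$ contains the constants and is a vector space, it remains to check closure under bounded monotone limits, after which the functional monotone class theorem yields that $\mathcal{H}$ contains every bounded $\Lambda$-measurable process; the general real-valued case then follows by truncation, for instance by applying the bounded case to $\arctan Y$ and composing the resulting representative with $\tan$.

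The step I expect to be the main obstacle is the closure of $\mathcal{H}$ under monotone limits. If $Y^n\uparrow Y$ with $Y^n=h^n(\cdot,\Delta Z)\in\mathcal{H}$ and $Y$ bounded, the fields $h^n$ need not converge off the random graph $\{(t,\omega,\Delta_t Z(\omega))\}$, so one cannot simply pass to a pointwise limit of the $h^n$. The resolution is to set $h:=\limsup_n h^n$, which is $\mathcal{P}(\mathcal{F})\otimes\mathcal{B}(\mathbb{R})$-measurable on all of $[0,\infty)\times\Omega\times\mathbb{R}$ and agrees with $\lim_n Y^n=Y$ once evaluated at $z=\Delta_t Z(\omega)$; this furnishes the required representative and closes the argument.
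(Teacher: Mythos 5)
Your proof is correct and is precisely the ``straightforward monotone class argument'' the paper invokes: the decomposition $\Delta Z=Z-Z_-$ isolating the non-predictable content of $\sigma(Z)$, the multiplicative class $V\,g(Z)$ with representative $h_t(\omega,z)=V_t(\omega)g(Z_{t-}(\omega)+z)$, and the $\limsup_n h^n$ device for monotone limits are exactly the ingredients that make that argument work. The only cosmetic point is that $\limsup_n h^n$ (and likewise $\tan\circ h$ in the unbounded case) should be truncated or redefined off the graph $\{z=\Delta_t Z(\omega)\}$ wherever it is infinite (resp.\ outside $(-\pi/2,\pi/2)$) so that the field stays real-valued; this changes nothing on the graph.
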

\begin{proof}
 This follows by a straightforward monotone class argument.
 \end{proof}

Our goal for the rest of this section will be to give proper meaning to the dynamics~\eqref{wealthdynamics1} for as many $\Lambda$-measurable $\tilde{\phi}=\phi(\Delta Z)$ as possible and to understand when they lead to nonnegative wealth processes $X^\phi:=X^{\tilde{\phi}} \geq 0$. 

\subsection{Positions maintaining nonnegative wealth}

Nonnegativity in our wealth dynamics will be obtained by conditions on the field $\phi=\phi_t(z)$ which ensure that no conceivable return shock $\eta(e)$ will correspond to a signal $z=\zeta(e)$ such that $\phi_t(z)\eta(e) < -1$. To formalize what return shocks are conceivable given a signal, let us disintegrate the jump measure $\nu$ in the form
\begin{align}\label{eq:disintegration}
  \nu(de \cap \{\zeta \not=0\}) = \int_{\zeta(E)\setminus\{0\}} K(z,de) \mu(dz) \text{ with } \mu := \nu \circ \zeta^{-1}.
\end{align}
Then the measure $\mu(dz)$ describes the frequency for receiving the signal $z$ and the kernel $K(z,de)$ describes the a posteriori probability of the jump mark $e$ conditional on having received the signal $z$; in particular, $K(z,E)=K(z,\{\zeta=z\})=1$. Of course, the case of no signal corresponds to $z=0$ and will need to be handled as well.

\begin{remark}
In our explicit case study below, signal and mark will be jointly normal and so the kernel $K(z,de)$ will describe the usual Bayesian update on the distribution of one marginal (the mark) given the other (the signal). In general, the possibility for such a disintegration is ensured because our mark space $E$ is Polish; cf. the discussion in \cite{jacodshiryaev}, p.~65. 
\end{remark}

Given an actual (i.e.\ a nonzero) signal, tight jump bounds are given by
\begin{align}\label{defetaboundssignal}
    \underline{\eta}(z):=K(z,.)\text{-ess inf } \eta \text{ and } \overline{\eta}(z):=K(z,.)\text{-ess sup } \eta, \quad z\in \zeta(E)\setminus\{0\}.
\end{align}
To rule out arbitrage opportunities (and thus degeneracy of our model), we impose the natural assumption
\begin{align}\label{noarbitragegivensignal}
    \underline{\eta}(z) < 0 < \overline{\eta}(z) \text{ for } \mu\text{-a.e. } z \in \zeta(E)\setminus\{0\}.
\end{align}
Note that if this condition is violated, for instance because $\mu(\underline{\eta} \geq 0) > 0$, the investor will receive with positive probability a signal which allows her to infer that stock prices can only jump upwards next; in such moments she can go long in the stock and thus pocket a riskless profit.  

Thus, under our henceforth standing assumption~\eqref{noarbitragegivensignal}, the interval
\begin{equation}\label{AdmSet_Signal}
\Phi(z):=\Big[-1/\overline{\eta}(z)\;,\;-1/\underline{\eta}(z)\Big], \quad z \in \zeta(E)\setminus\{0\},
\end{equation}
is identified as the range of positions $\phi_t(z)$ that, for a given signal $z \in \zeta(E)\setminus\{0\}$, will ensure $\eta(e) \phi_t(z) \geq -1$ (and thus nonnegative wealth after the impending jump) for $K(z,.)$-almost every mark $e \in E$.

Without a signal, the investor still learns something, namely that, if any, shocks can only emerge from marks placed in the set $\{\zeta=0\}$. Now, if $\nu(\{\zeta=0, \; \eta\not=0\})=0$, the investor can rule out unsignaled shocks almost surely and so can maintain nonnegative wealth by taking any position in
\begin{align}
    \Phi(0):=\mathbb{R}.
\end{align}
In the complementary case $\nu(\{\zeta=0,\eta\neq 0\})>0$, we introduce 
\begin{align}\label{etaboundsnosignal}
    \underline{\eta}(0)&:=\nu( . \cap \{\zeta=0,\eta\neq 0\}) \text{-ess inf } \eta \geq -1, \\ 
    \overline{\eta}(0)&:=\nu( . \cap \{\zeta=0,\eta\neq 0\})\text{-ess sup } \eta ,
\end{align}
which allow us to describe the set of positions ensuring nonnegative wealth by
\begin{equation}\label{AdmSet_Signal2}
\Phi(0):=\begin{cases}\big[-1/\overline{\eta}(0)\;,\;-1/\underline{\eta}(0)\big],&\quad \text{if }\underline{\eta}(0)<0<\overline{\eta}(0), \\
\big[-1/\overline{\eta}(0)\;,\;+\infty\big),&\quad \text{if }0\leq\underline{\eta}(0)\leq\overline{\eta}(0),\\
\big(-\infty\;,\;-1/\underline{\eta}(0)\big],&\quad \text{if }\underline{\eta}(0)\leq\overline{\eta}(0)\leq 0.
\end{cases} 
\end{equation}

\subsection{Cumulative returns with jump signals}\label{chapter_wealthprocess}

An extension from predictable to Meyer-measurable $\tilde{\phi}=\phi(\Delta Z)$ is nontrivial for dynamics such as~\eqref{wealthdynamics1} since the cumulative returns process
\begin{align}\label{returnsprocess}
    R^\phi_t :=\int_{[0,t]}\left(r+\tilde{\phi}_s(\kappa-r)\right)ds+\int_{[0,t]}\tilde{\phi}_s\sigma dW_s+\int_{[0,t] \times E}\tilde{\phi}_s\eta(e)\tilde{N}(ds,de)
\end{align}
involves stochastic integration. 

Actually, the only issues arise with parts of the last term. Indeed, for the $ds$-terms the distinction between $\tilde{\phi}$ and its counterpart without signal $\phi(0)$ is harmless because, differing only at countably many times, they belong to the same equivalence class of $ds$-integrands; in fact, the same argument works for the $dW_s$-terms due to the continuity of Brownian paths. Also the contributions from sizable return shocks, say of absolute size larger than~1, do not pose a problem for making sense of~\eqref{wealthdynamics1}: marks in $\{|\eta|>1\}$ occur only finitely many times almost surely because of~\eqref{etasquareintegrable} and the jump integral over $[0,t]\times\{|\eta|>1\}$ is just a finite sum. Moreover, unsignaled contributions from small jumps as collected in 
\begin{align}
    \int_{[0,t] \times \{\zeta=0,|\eta|\leq 1\}}\tilde{\phi}_s\eta(e)\tilde{N}(ds,de) = \int_{[0,t] \times \{\zeta=0,|\eta|\leq 1\}}\phi_s(0)\eta(e)\tilde{N}(ds,de)
\end{align}
are well-defined by classical stochastic integration theory provided that $\int_0^T\phi^2_s(0)ds<\infty$ almost surely (cf. \citep[p. 71ff.]{jacodshiryaev}).

The main issue is to give proper meaning to the contributions from small signaled jumps, i.e., to
\begin{align}
    I_t(\tilde{\phi})&=\int_{[0,t] \times \{\zeta\neq0,|\eta|\leq 1\}}  \tilde{\phi}_s\eta(e) \tilde{N}(ds,de)\\&=\int_0^t \tilde{\phi}_s d \int_{[0,s] \times \{\zeta\neq0,|\eta| \leq 1\}} \eta(e) \tilde{N}(dr,de), \quad t \in [0,T].
\end{align}
Indeed, $\int_{[0,.] \times \{\zeta\neq0,|\eta| \leq 1\}} \eta(e) \tilde{N}(dr,de)$ may be a martingale of unbounded variation (namely exactly when $\int_{\{\zeta\not=0, |\eta|\leq 1\}} |\eta(e)| \nu(de)=\infty$), but its jumps may be partially anticipated by the integrand $\tilde{\phi}=\phi(\Delta Z)$ due to the signal process $Z$, and so blow ups might occur even for bounded integrands.

In the simple case where $\tilde{\phi}=\phi(\Delta Z)$ almost surely differs from the predictable $\phi(0)$ only at finitely many (random) times, defining the integral $I(\tilde{\phi})$ is straightforward:
\begin{align}\label{defsimpleint}
    I_t(\tilde{\phi})&:=\int_{[0,t]\times \{\zeta\neq0,|\eta|\leq 1\}} \phi_s(0) \eta(e) \tilde{N}(ds,de)\\&\quad+\int_{[0,t]\times \{\zeta\neq0,|\eta|\leq 1\}} (\phi_s(\zeta(e))-\phi_s(0))  \eta(e) \tilde{N}(ds,de)\\
    &=\int_{[0,t]\times \{\zeta\neq0,|\eta|\leq 1\}} \phi_s(0) \eta(e) \tilde{N}(ds,de)\\&\quad+\int_{[0,t]\times \{\zeta\neq0,|\eta|\leq 1\}} (\phi_s(\zeta(e))-\phi_s(0))  \eta(e) N(ds,de).
\end{align}
Here, the integrals of $\phi_s(\zeta(e))-\phi_s(0)$ are well-defined in this simple case because the set $\{s\;:\;\tilde{\phi}_s\not=\phi_s(0)\}$ is finite. For the same reason, the use of $N$ rather than $\tilde{N}$ in the last term is justified since the compensator $ds\otimes\nu(de)$ occuring in $\tilde{N}(ds,de)$ almost surely does not act on this a.s.\ finite set. Moreover, note that the integral of $\phi_s(0) \eta(e)$ w.r.t.\ $\tilde{N}(ds,de)$ is well-defined by classical stochastic integration theory, due to our integrability assumptions on $\phi(0)$ and $\eta$.

The following result shows that we can continuously extend from such simple strategies to a much larger class of integrands. This class will be characterized in terms of the average small jump given a signal 
\begin{align}\label{defetahat}
    \hat{\eta}(z):= \frac{1}{K(z,\{|\eta|\leq 1\})}\int_{\{\zeta=z,|\eta|\leq 1\}} \eta(e) K(z,de), \quad z \in \zeta(E)\setminus\{0\},
\end{align}
(with the convention that $\hat{\eta}(z):=0$ if $K(z,\{|\eta|\leq 1\})=0$)
and in terms of the remaining uncertainty as measured by the variance
\begin{align}\label{defveta}
    v_{\eta}(z):=\int_{\{\zeta=z,|\eta|\leq 1\}} (\eta(e)-\hat{\eta}(z))^2 K(z,de), \quad z \in \zeta(E)\setminus\{0\}.
\end{align}

\begin{thm}\label{Thm:LambdaIntegration} If $\int_{\zeta(E)\setminus\{0\}}K(z,\{|\eta|\leq 1\})|\hat{\eta}(z)|\mu(dz)<\infty$,
 there is a unique continuous, linear extension of $I=I(\tilde{\phi})$ to the class $\mathcal{I}(Z)$ of $\Lambda$-measurable $\tilde{\phi}=\phi(\Delta Z)$ satisfying almost surely 
 \begin{align}\label{intconditions}
     \int_0^T \left\{\phi_s^2(0)+\int_{\zeta(E)\setminus\{0\}} \left(|\phi_s(z)|K(z,\{|\eta|\leq 1\})|\hat{\eta}(z)| + \phi_s(z)^2v_\eta(z)\right)\mu(dz)\right\} ds<\infty.
 \end{align}
  Here, continuity is meant in in the sense that $(I_t(\tilde{\phi}^n))_{t \in [0,T]} \to 0$ uniformly in probability for any sequence of $\Lambda$-measurable $(\tilde{\phi}^n=\phi^n(\Delta Z))_{n=1,2,\dots}$ for which the corresponding integral in~\eqref{intconditions} vanishes in probability.
  
  The integral process $I(\tilde{\phi})$ is a special semimartingale with Doob-Meyer decomposition $I(\tilde{\phi})=M^{(1)}(\tilde{\phi})+M^{(2)}(\tilde{\phi})+A(\tilde{\phi})$
 into the local martingales
  \begin{align}
      M^{(1)}_t(\tilde{\phi})&:=\int_{[0,t]\times \{\zeta\not=0, |\eta|\leq 1\}} \phi_s(\zeta(e))\hat{\eta}(\zeta(e))\tilde{N}(ds,de), \quad t \in [0,T],\\
      M^{(2)}_t(\tilde{\phi})&:=\int_{[0,t]\times \{\zeta\not=0,|\eta|\leq 1\}} \phi_s(\zeta(e))\left(\eta(e)-\hat{\eta}(\zeta(e))\right)N(ds,de), \quad t \in [0,T],
  \end{align}
  and the absolutely continuous, adapted process
  \begin{align}
      A_t(\tilde{\phi}):=\int_0^t \int_{\zeta(E)\setminus\{0\}} (\phi_s(z)-\phi_s(0))K(z,\{|\eta|\leq 1\})\hat{\eta}(z)\mu(dz) ds, \quad t \in [0,T].
  \end{align}
  If even the expectation of the integral in~\eqref{intconditions} is finite then $M^{(1)}(\tilde{\phi})$ is a martingale with finite expected total variation over $[0,T]$, $M^{(2)}(\tilde{\phi})$ is a square-integrable martingale whose $L^2$-norm at time $T$ is $\E[\int_0^T \int_{\zeta(E)\setminus\{0\}}\phi_s(z)^2v_\eta(z)\mu(dz) ds]^{1/2}$, and $A(\tilde{\phi})$ has finite expected total variation over $[0,T]$.
\end{thm}
\begin{proof}
In the following, let $\tilde{\phi}=\phi(\Delta Z)$ be simple in the sense that $\tilde{\psi}:=\psi(\Delta Z):=\phi(\Delta Z)-\phi(0)$ is bounded and different from zero at most $n$ times almost surely for some $n \in \mathbb{N}$. The integral of such $\tilde{\phi}$ is given by~\eqref{defsimpleint} and can be decomposed as
\begin{align}
    I_t(\tilde{\phi})
    &=\int_{[0,t]\times \{\zeta\neq0,|\eta|\leq 1\}} \phi_s(0) \eta(e) \tilde{N}(ds,de)\\&\quad+\int_{[0,t]\times\{\zeta\neq0,|\eta|\leq 1\}} \psi_s(\zeta(e))\hat{\eta}(\zeta(e))N(ds,de)\label{eq:proofintegration}\\&\quad+\int_{[0,t]\times\{\zeta\neq0,|\eta|\leq 1\}} \psi_s(\zeta(e))(\eta(e)-\hat{\eta}(\zeta(e)))N(ds,de).
\end{align}
Observe that the last term in \eqref{eq:proofintegration}, which corresponds to $M^{(2)}_t(\tilde{\psi})$ in our theorem's notation, is a right-continuous martingale since for any stopping time $\tau\leq T$ its expectation vanishes:
\begin{align}
    &\mathbb{E}\left[M^{(2)}_{\tau}(\tilde{\psi})\right]
    =\mathbb{E}\left[\int_{[0,\tau]}\int_{\{\zeta\neq0,|\eta|\leq 1\}} \psi_s(\zeta(e)) \left(\eta(e)-\hat{\eta}(\zeta(e))\right)\nu(de)ds\right]\\
    &=\mathbb{E}\left[\int_{[0,\tau]}\int_{\zeta(E)\setminus\{0\}} \psi_s(z)\left(\int_{\{\zeta=z,|\eta|\leq 1\}}\eta(e)K(z,de)-K(z,\{|\eta|\leq 1\})\hat{\eta}(z)\right)\mu(dz)ds\right],
\end{align}
which is zero  by definition of $\hat{\eta}(z)$.
Consequently, the finitely many increments of $M^{(2)}(\tilde{\psi})$ are uncorrelated and so 
\begin{align}
    \mathbb{E}\left[\big(M^{(2)}_T(\tilde{\psi})\big)^2\right]=&\mathbb{E}\left[\int_{[0,T]\times\{\zeta\neq0,|\eta|\leq 1\}} \psi^2_s(\zeta(e))(\eta(e)-\hat{\eta}(\zeta(e)))^2N(ds,de)\right]\\
    =&\mathbb{E}\left[\int_{[0,T]}\int_{\{\zeta\neq0,|\eta|\leq 1\}} \psi^2_s(\zeta(e))(\eta(e)-\hat{\eta}(\zeta(e)))^2\nu(de)ds\right]\\
    =&\mathbb{E}\left[\int_{[0,T]}\int_{\zeta(E)\setminus\{0\}} \psi^2_s(z) v_\eta(z)\mu(dz)ds\right].
\end{align}
Using this Itô-isometry, we can extend in the usual way the integral $M^{(2)}_t(\tilde{\psi})$ as an $L^2$-martingale from simple integrands to  any $\Lambda$-measurable $\tilde{\psi}=\psi(\Delta Z)$ for which the preceding expectation is finite. 
By standard localization methods, we can then further expand this construction to $\Lambda$-measurable processes $\tilde{\psi}=\psi(\Delta Z)$ for which the iterated integral in the preceding expectation is merely finite almost surely. This encompasses any $\tilde{\psi}=\tilde{\phi}-\phi(0)$ which can emerge from $\tilde{\phi}$ satisfying condition~\eqref{intconditions}, and it still leads at least to a local martingale.

Focusing on the second term in~\eqref{eq:proofintegration}, we note that 
\begin{align}
    \mathbb{E}&\left[
    \int_{[0,T]\times\{\zeta\neq0,|\eta|\leq 1\}} |\psi_s(\zeta(e))||\hat{\eta}(\zeta(e))|N(ds,de)\right]\\
    &=
    \mathbb{E}\left[\int_{[0,T]\times\{\zeta\neq0,|\eta|\leq 1\}}|\psi_t(\zeta(e))||\hat{\eta}(\zeta(e))|\nu(de)ds\right]\\
    &=\mathbb{E}\left[\int_{[0,T]}\int_{\zeta(E)\setminus\{0\}}|\psi_t(z)|K(z,\{|\eta|\leq 1\})|\hat{\eta}(z)|\mu(dz)ds\right].
\end{align}
Consequently, we can define by standard localization arguments $$\int_{[0,.]\times\{\zeta\neq0,|\eta|\leq 1\}} \psi_s(\zeta(e))\hat{\eta}(\zeta(e))N(ds,de)$$ as an ordinary Lebesgue-integral for any $\tilde{\psi}=\psi(\Delta Z)$ for which the iterated integral in the above expectation is finite almost surely. Again, by our integrability assumption on $\hat{\eta}$, this encompasses any  $\tilde{\psi}=\tilde{\phi}-\phi(0)$ which can emerge from $\Lambda$-measurable $\tilde{\phi}$ satisfying condition~\eqref{intconditions}. Moreover, we can compensate this integral by passing from $N$ to $\tilde{N}$ and obtain its Doob-Meyer decomposition into the bounded variation local martingale $M^{(1)}(\tilde{\psi})$ and the absolutely continuous adapted process $A(\tilde{\psi})$. Also, we find that both parts of this decomposition are of finite expected total variation if the expectation of the integral in~\eqref{intconditions} is finite. Now, with all integrals constructed for both $\tilde{\phi}$ and $\phi(0)$, we are free to consolidate the contributions to $I(\tilde{\phi})$ from integrals w.r.t.\ $\phi(0)$ and arrive at the claimed representation for $I(\tilde{\phi})$.


As a final step in our proof, let us remark that the continuity property of the integral follows by standard arguments which are omitted here for the sake of brevity.
\end{proof}

\subsection{Admissible strategies with jump signals}

Combining the results from the previous two sections, we can now give a concise description of admissible strategies if we assume henceforth that
\begin{align}
&\int_{\zeta(E)\setminus\{0\}}\left(\frac{1}{\overline{\eta}(z)}\vee\frac{1}{|\underline{\eta}(z)|}\right)K(z,\{|\eta|\leq 1\})|\hat{\eta}(z)|\mu(dz)<\infty\label{cond:wealthprocess1}
\end{align}
and
\begin{align}
&\int_{\zeta(E)\setminus\{0\}}\left(\frac{1}{\overline{\eta}(z)}\vee\frac{1}{|\underline{\eta}(z)|}\right)^2v_\eta(z)\mu(dz)<\infty,\label{cond:wealthprocess2}
\end{align}
where $\underline{\eta}(z)$ and $\overline{\eta}(z)$ are the jump bounds from~\eqref{defetaboundssignal}.

\begin{corollary}\label{cor:wealthdynamics}
 Under conditions~\eqref{noarbitragegivensignal}, \eqref{cond:wealthprocess1} and~\eqref{cond:wealthprocess2}, any $\Lambda$-measurable strategy $\tilde{\phi}=\phi(\Delta Z)$ with
 \begin{align}\label{admissibilityconditions}
     \int_0^T \phi_t^2(0)dt<\infty \text{ and } \phi_t(\Delta_t Z) \in \Phi(\Delta_t Z) \text{ for } t\in [0,T] \text{ a.s. }
\end{align}
 admits wealth dynamics $X^{\tilde{\phi}}=X^\phi$ solving~\eqref{wealthdynamics1} which remain nonnegative throughout $[0,T]$ almost surely. In particular, $X^\phi$ is a semimartingale with dynamics $dX^\phi_t=X^\phi_{t-}dR^\phi_t$ where $R^\phi$ is the semimartingale returns process of~\eqref{returnsprocess} whose changes $dR^\phi$ can be decomposed into 
 contributions from large jumps
 \begin{align}
     \int_{\{\eta> 1\}}\phi_t(\zeta(e))\eta(e)N(dt,de),
 \end{align}
 into local martingale contributions from small jumps and the diffusion term
 \begin{align}
     dM^{\phi}_t&:=\int_{\{\zeta=0,|\eta|\leq 1\}}\phi_t(0)\eta(e)\tilde{N}(dt,de)+\int_{\{\zeta\not=0, |\eta|\leq 1\}} \phi_t(\zeta(e))\hat{\eta}(\zeta(e))\tilde{N}(dt,de)\\
      &\qquad+\int_{ \{\zeta\not=0,|\eta|\leq 1\}} \phi_t(\zeta(e))\left(\eta(e)-\hat{\eta}(\zeta(e))\right)N(dt,de)+\phi_t(0)\sigma dW_t
 \end{align}
 and into an absolutely continuous, adapted drift \begin{align}
     dD^{\phi}_t:=\left(r+\phi_t(0)(\kappa-r)+\int_{\zeta(E)\setminus\{0\}} (\phi_t(z)-\phi_t(0))K(z,\{|\eta|\leq 1\})\hat{\eta}(z)\mu(dz)\right)dt.
 \end{align}
\end{corollary}
\begin{proof}
Conditions~\eqref{noarbitragegivensignal}, \eqref{cond:wealthprocess1} and~\eqref{cond:wealthprocess2} ensure that any $\tilde{\phi}=\phi(\Delta Z)$ with~\eqref{admissibilityconditions} yields a process for which all the above contributions can be integrated by classical stochastic integration or via Theorem~\ref{Thm:LambdaIntegration} to form the cumulative returns process $R^\phi$ of~\eqref{returnsprocess} as a semimartingale. For this, let us note that the integrability condition $\int_{\zeta(E)\setminus\{0\}}K(z,\{|\eta|\leq 1\})|\hat{\eta}(z)|\mu(dz)<\infty$ required by this theorem holds here because of~\eqref{cond:wealthprocess1}. Indeed, our standing assumption that $\nu(\{\eta < -1\})=0$ (cf.~\eqref{etalowerbound}) ensures $\underline{\eta}(z)\geq-1$ so that in conjunction with~\eqref{noarbitragegivensignal} we have $1/|\underline{\eta}(z)| \geq 1$. 

Finally, the range restrictions in~\eqref{admissibilityconditions} ensure that $\Delta R^\phi \geq -1$ and we obtain our claim by the well-known properties of stochastic exponentials; cf. Section~II.8 in~\cite{jacodshiryaev}.
\end{proof}
In light of the above result, it is natural to consider 
\begin{align}\label{admissiblestrategies}
\mathcal{A}(Z):=\{\tilde{\phi} \text{ $\Lambda$-measurable satisfying } \eqref{admissibilityconditions}\}     
\end{align} 
as our class of admissible strategies for an investor receiving the jump signal $Z$.

\section{Optimal Investment Strategy}\label{sec:utilitymaximization}

Having defined admissible strategies with jump signals $\mathcal{A}(Z)$ in~\eqref{admissiblestrategies}, let us turn in this section to Merton's optimal investment problem. More precisely, let us examine the optimization problem of maximizing the expected utility from terminal wealth:
\begin{equation}\label{optimizationproblem}
\sup\limits_{\tilde{\phi}\in\mathcal{A}(Z)}\;\mathbb{E}\big[u(X^{\tilde{\phi}}_T)\big]
\end{equation}
for a suitable utility function  $u:\mathbb{R}^+\rightarrow \mathbb{R}$ specifying the investor's risk aversion.

\subsection{Hamilton-Jacobi-Bellman Equation with a jump signal}\label{ChapterHJB}
Our optimization problem (\ref{optimizationproblem}) has the value function 
\begin{equation}\label{valuefct}
v(t,x)=\max\limits_{\phi\in\Phi}\;\mathbb{E}\bigg[u(X^\phi_T)| X^\phi_t=x\bigg], \quad (t,x)\in[0,T]\times\mathbb{R}^+.
\end{equation}
Dynamic programming suggests that this value function solves, in a suitable sense, the so called Hamilton-Jacobi-Bellman (HJB) equation. For the classical case of a predictable information structure, which is included in our framework by setting $Z\equiv0$,  the HJB equation is well known as 
\begin{equation}\label{HJBpredictable}
\begin{aligned}
&\partial_t v(t,x)+rx\partial_x v(t,x)+\sup_{\varphi\in\Phi} \bigg\lbrace \varphi x (\kappa-r) \partial_x v(t,x) +\frac{1}{2}\partial^2_xv(t,x)x^2\varphi^2 \sigma^2\quad\quad\\
&\hspace{5cm}+ \int_{E}\left[v\left(t,x+\varphi x \eta(e)\right)-v(t,x)\right]\nu(de)\bigg\rbrace=0 
\end{aligned}
\end{equation}
on $[0,T]\times\mathbb{R}^+$, with terminal condition $v(T,x)=u(x)$, $x\in\mathbb{R}^+$. 

In order to understand how this is to be transformed when the investor can react to a signal that she receives, let us heuristically investigate when we can expect the martingale optimality principle to work, i.e., under what conditions the value process $V^\phi_t=v(t,X^{\phi}_t)$, $t \in [0,T]$, can be expected to be a super-martingale for arbitrary admissible strategies $\phi \in \mathcal{A}(Z)$ and a martingale for some (then optimal) strategy $\phi^* \in \mathcal{A}(Z)$. 

Recalling the wealth dynamics from Corollary~\ref{cor:wealthdynamics} and assuming $v$ to be smooth, an application of Ito's formula yields 
\begin{align}
V^\phi_t&= v(0,X^\phi_0)+\int_0^t\partial_t v(s,X^\phi_{s-})ds+\int_0^t\partial_x v(s,X^\phi_{s-})dX^\phi_s+\frac{1}{2}\int_0^t\partial^2_xv(s,X^\phi_{s-})d[X^\phi,X^\phi]^c_s\\
&\quad +\sum\limits_{0\leq s\leq t}\left[v(s,X^\phi_{s})-v(t,X^\phi_{s-})-\partial_xv(s,X^\phi_{s-})\Delta_s X^\phi\right]\\
&= v(0,X^\phi_0)+\int_0^t \left[\partial_t v(s,X^\phi_{s-})+\frac{1}{2}\partial^2_xv(s,X^\phi_{s-})\sigma^2(X^\phi_{s-})^2(\phi_s(0))^2\right]ds\\\label{valueprocessdecomposition}
&\quad+\int_0^t\partial_xv(s,X^\phi_{s-})X^\phi_{s-}(dM^{\phi}_s+dD^{\phi}_s)\\
&\quad+\int_{[0,t]\times E} \left[v\left(s,X^\phi_{s-}\left(1+\phi_s(\zeta(e)) \eta(e)\right)\right)-v(s,X^\phi_{s-})\right.\\&\hspace{2.5cm}\left.-\mathbbm{1}_{\{|\eta|\leq1\}}(e)\partial_xv(s,X^\phi_{s-})X^\phi_{s-}\phi_s(\zeta(e)) \eta(e)\right]\Big(\bar{N}(ds,de)+ds\otimes\nu(de)\Big),
\end{align}
where
\begin{align}
    \bar{N}(ds,de):=N(ds,de)-ds\otimes\nu(de)
\end{align}
denotes the fully compensated Poisson measure, including compensation on $\{|\eta|>1\}=\{\eta>1\}$.
For this to be possible, we need to ensure that $v$ satisfies
\begin{align}\label{cond:integrabilityHJB1}
    \int_{\{\eta>1\}}v\left(s,x\left(1+\phi_s(\zeta(e)) \eta(e)\right)\right)\nu(de)<\infty \text{ for any } x>0,
\end{align}
i.e.\ integrable contributions from ``large'' stock price jumps (as will be verified in our case study below).

Let us now collect the drift, i.e. the $ds$-terms in these dynamics. Contributions not related to jumps are 
\begin{align}\label{dsnojumpconnection}
    \partial_t v(s,X^\phi_{s-})+\frac{1}{2}\partial^2_{x}v(s,X^\phi_{s-})\sigma^2(X^\phi_{s-})^2(\phi_s(0))^2+\partial_xv(s,X^\phi_{s-})X^\phi_{s-}(r+(\kappa-r)\phi_s(0));
\end{align}
jump-related $ds$-terms amount to
\begin{align}
    \partial_xv(s,X^\phi_{s-})&X^\phi_{s-}\int_{\zeta(E)\setminus\{0\}}(\phi_s(z)-\phi_s(0))K(z,\{|\eta|\leq 1\})\hat{\eta}(z)\mu(dz)\\
    &+\int_{E} \left[v\left(s,X^\phi_{s-}\left(1+\phi_s(\zeta(e)) \eta(e)\right)\right)-v(s,X^\phi_{s-})\right.\label{dsjumpconnection}\\&\hspace{2.5cm}\left.-\mathbbm{1}_{\{|\eta|\leq1\}}(e)\partial_xv(s,X^\phi_{s-})X^\phi_{s-}\phi_s(\zeta(e)) \eta(e)\right]\nu(de).
\end{align}
Let us focus on the last integral and, more precisely, its contribution from the set $\{\zeta\not=0\}$. Using the disintegration~\eqref{eq:disintegration} of $\nu$, this contribution can be written as
\begin{align}
    \int_{\zeta(E)\setminus\{0\}} &\int_{\{\zeta=z\}} \left[v\left(s,X^\phi_{s-}\left(1+\phi_s(z) \eta(e)\right)\right)-v(s,X^\phi_{s-})\right.\\&\hspace{2.5cm}\left.-\mathbbm{1}_{\{|\eta|\leq1\}}(e)\partial_xv(s,X^\phi_{s-})X^\phi_{s-}\phi_s(z) \eta(e)\right]K(z,de)\mu(dz)\\
    = \int_{\zeta(E)\setminus\{0\}} &\left\{\int_{\{\zeta=z\}} \left[v\left(s,X^\phi_{s-}\left(1+\phi_s(z) \eta(e)\right)\right)-v(s,X^\phi_{s-})\right]K(z,de)\right.\\&\hspace{2.5cm}-\partial_xv(s,X^\phi_{s-})X^\phi_{s-}\phi_s(z) K(z,\{|\eta|\leq1\})\hat{\eta}(z)\Bigg\}\mu(dz),
\end{align}
and we see that the integral over the last line cancels with the integral of $\phi_s(z)$ in~\eqref{dsjumpconnection}.

With $x:=X^\phi_{s-}$, $\varphi^0:=\phi_s(0)$, $\varphi^z :=\phi_s(z)$, the drift-terms thus sum up to
\begin{align}
    \partial_t& v(s,x)+\frac{1}{2}x^2\partial^2_{x}v(s,x)\sigma^2(\varphi^0)^2\\&+x\partial_xv(s,x)\left(r+(\kappa-r)\varphi^0-\int_{\zeta(E)\setminus\{0\}}\varphi^0K(z,\{|\eta|\leq 1\})\hat{\eta}(z)\mu(dz)\right)\\
    &+\int_{\{\zeta=0\}} \left[v\left(s,x\left(1+\varphi^0 \eta(e)\right)\right)-v(s,x)-\mathbbm{1}_{\{|\eta|\leq1\}}(e)x\partial_xv(s,x)\varphi^0 \eta(e)\right]\nu(de)\\
    &+\int_{\zeta(E)\setminus\{0\}} \int_{\{\zeta=z\}}\left[v\left(s,x\left(1+\varphi^z \eta(e)\right)\right)-v(s,x)\right]K(z,de)\mu(dz).
\end{align}

As usual in the martingale optimality principle this has to be non-positive for any admissible strategy $\tilde{\phi}=\phi(\Delta Z) \in \mathcal{A}(Z)$ and to vanish for some (then optimal) strategy. In view of the domain constraints~\eqref{admissibilityconditions}, this leads us to the HJB-equation
\begin{align}
&\partial_t v(t,x)+rx\partial_x v(t,x)\\&+\sup\limits_{\varphi\in\Phi(0)} \bigg\lbrace  x\partial_x v(t,x) \left[(\kappa-r) -\int_{\zeta(E)\setminus\{0\}}K(z,\{|\eta|\leq 1\})\hat{\eta}(z)\mu(dz)\right]\varphi +\frac{1}{2}\sigma^2x^2\partial^2_xv(t,x)\varphi^2 \\
&\qquad\qquad+ \int_{\{\zeta=0\}}\left[v(t,x(1+\varphi \eta(e)))-v(t,x)-\mathbbm{1}_{\{|\eta|\leq 1\}}(e)x\partial_xv(t,x)\varphi \eta(e)\right]\; \nu(de)\bigg\rbrace\label{HJB}\\
&+\int_{\zeta(E)\setminus\{0\}}\sup\limits_{\varphi\in\Phi({z})}\bigg\lbrace \int_{\{\zeta=z\}}\left[v(t,x(1+\varphi  \eta(e)))-v(t,x)\right]\;K(z,de)\bigg\rbrace\mu(dz) \; =0
\end{align}
for $(t,x)\in[0,T]\times\mathbb{R}^+$. Of course, the terminal condition remains unchanged:
\begin{align}
    v(T,x)=u(x), \quad x \in \mathbb R^+.
\end{align}
It is now a standard exercise to formulate an abstract verification theorem that would assume the availability of a `nice' solution $v$ to this HJB-equation and impose some (typically not particularly tangible) regularity conditions to path a way towards a solution to our utility maximization problem~\eqref{optimizationproblem}. Rather than following this well trodden path, we prefer to work out a case study for power utilities in the next section where an explicit solution can be given and a financial-economic discussion of market signals becomes possible once these are fully specified.

\subsection{A solution for power utility}

In order to solve problem (\ref{optimizationproblem}) explicitly, let us restrict to the case where the investor's risk aversion is described by a power utility function: \begin{equation}\label{eq:powerutility}
u(x)=\frac{x^{1-\alpha}}{1-\alpha},\quad x> 0,\quad u(x)=\begin{cases}0,\quad &\alpha\in(0,1)\\
-\infty,\quad &\alpha > 1
\end{cases}
\end{equation}
for some relative risk aversion constant $\alpha \in (0,\infty)\setminus\{1\}$; the case $\alpha=1$ corresponds to $\log$-utility and is omitted for ease of presentation.
\begin{thm}\label{Thm:Verification}
Let the investor have power utility preferences~\eqref{eq:powerutility} and suppose that
\begin{align}
M:=&\sup\limits_{\varphi\in\Phi(0)} \bigg\lbrace \varphi\Big[(\kappa-r) -\int_{\zeta(E)\setminus\{0\}}K(z,\{|\eta|\leq 1\})\hat{\eta}(z)\mu(dz)\Big] -\frac{1}{2}\alpha\varphi^2 \sigma^2 \\&\hspace{1cm}+\int_{\{\zeta=0\}}\big[u(1+\varphi \eta(e))-u(1)-\mathbbm{1}_{\{|\eta|\leq 1\}}(e)\varphi\eta(e) \big]\nu(de)\bigg\rbrace\label{ConstantM_infiniteact}\\
&+ \int_{\zeta(E)\setminus\{0\}}\sup_{\varphi\in\Phi(z)} \bigg\lbrace\int_{\{\zeta=z\}}\big[u(1+\varphi  \eta(e))-u(1)\big]K(z,de)\bigg\rbrace\mu(dz)\quad<\infty.
\end{align}
Then the value function of problem \eqref{optimizationproblem} is given by 
\begin{equation}\label{valuefctpowerutility}
v(t,x)= u(xe^{(r+M)(T-t)})=u(x)\:e^{(1-\alpha)(r+M)(T-t)}, \quad (t,x) \in [0,T] \times (0,\infty).
\end{equation}
and it is indeed a smooth solution solution to the HJB-equation~\eqref{HJB}.  

Moreover, the optimal strategy is of the form $\tilde{\phi}^*_t=\phi^*(\Delta_tZ)$ where  
\begin{align}
\phi^*(0):= \argmax\limits_{\varphi\in\Phi(0)}\:\Big\lbrace&\varphi\Big[(\kappa-r) -\int_{\zeta(E)\setminus\{0\}}K(z,\{|\eta|\leq 1\})\hat{\eta}(z)\mu(dz)\Big] -\frac{1}{2}\alpha\varphi^2 \sigma^2 \\&+ \int_{\{\zeta=0\}}\big[u(1+\varphi \eta(e))-u(1) -\mathbbm{1}_{\{|\eta|\leq 1\}}\varphi\eta(e)\big]\nu(de)\Big\rbrace
\end{align}
specifies the default investment without a signal and where \begin{equation}
\phi^{*}(z)=\argmax\limits_{\varphi\in\Phi(z)} \int_{\{\zeta=z\}}\big[u(1+\varphi  \eta(e))-u(1)\big]K(z,de),\quad z\in z(E)\setminus\{0\}
\end{equation}
specifies what to do optimally when receiving the signal $z\neq 0$.
\end{thm}
\begin{remark}
The condition $M<\infty$ always holds for $\alpha>1$ since the utility function is bounded from above in this case. General conditions for $\alpha \in (0,1)$ are difficult to give as $M$ can become infinite when jumps are ``too favorable'' or signals ``too good''.
\end{remark}
\begin{proof}
Let the function $v(t,x)$, $(t,x)\in[0,T]\times\mathbb{R}^+$ be as given by \eqref{valuefctpowerutility}. It is readily checked that  $v$ is a $C^{1,2}([0,T]\times (0,\infty))$-solution of the HJB-equation~\eqref{HJB} and meets the terminal condition $v(T,x)=u(x)$ for all $x\in\mathbb{R}^+$.  Moreover, the function $v$ satisfies $\lim\limits_{\epsilon\to 0} v(t,\epsilon)=u(0+)$ for every $t\in[0,T]$ and  condition~\eqref{cond:integrabilityHJB1} guaranteeing well-definedness of the value process dynamics in \eqref{valueprocessdecomposition} is implied by \eqref{ConstantM_infiniteact}.

We consider an admissible strategy $\phi$ and let us argue that 
\begin{align}\label{eq:optimalityproof}
\E [u(X^\phi_T)|X^\phi_t=x] \leq v(t,x) \quad\text{for all $(t,x)\in[0,T]\times\mathbb{R}^+.$}
\end{align} 
To this end,  let us consider, for fixed $\epsilon>0$, the process $V^{\phi,\epsilon}_t:=v(t,X^\phi_t+\epsilon)$, $t\in [0,T]$.
By the argument in Chapter~\ref{ChapterHJB}, we know that the drift of the process $V^{\phi,\epsilon}$ is non-positive for 
every admissible $\phi$.
Moreover, since we know that for every $\epsilon>0$ and every admissible $\phi$, the process $V^{\phi,\epsilon}\geq  - C_\epsilon$ is bounded from below for some constant $C_\epsilon>0$, we conclude that $V^{\phi,\epsilon}$ must be a supermartingale as its local martingale part is bounded from below (cf. \eqref{valueprocessdecomposition}). This implies that for all $t\in[0,T]$ and all $\epsilon>0$, we have $$\mathbb{E}[u(X^\phi_T)|X^\phi_t=x]\leq \mathbb{E}[V^{\phi,\epsilon}_T|X^\phi_t=x]\leq \E[V^{\phi,\epsilon}_t|X^\phi_t=x]=v(t,x+\epsilon).$$
We can then send $\epsilon \downarrow 0$  and conclude \eqref{eq:optimalityproof}.

Let us next construct the candidate optimal policy $\phi^*$ by arguing that both arg max used for its description are singletons.
For the maximization in $\phi^*(z)$, $z\neq 0$, it is readily checked that we have upper semi-continuity and strict concavity of the target function and compactness of the domain---and so we get a unique $\argmax$. The same argument applies to $\phi^*(0)$ in case the set $\Phi(0)$ is compact. If $\nu(\{\zeta=0\})>0$, but $\Phi(0)$ is not compact, we can consider the two cases where $\Phi(0)=[a,+\infty)$ and where $\Phi(0)=(-\infty,b]$, $a,b\in\mathbb{R}$, corresponding to the case where there are only positive or only negative jumps. However, since the term $-\frac{1}{2}\alpha\sigma^2\varphi^2$ dominates the jump term in the target function for large $|\varphi|$, the maximum must be attained at some unique point $\phi^*(0)\in\mathbb{R}$ as well. Finally, if $\nu(\{\zeta=0\})=0$, the unique maximum $\phi^*(0)=\argmax\limits_{\varphi\in\mathbb{R}}\{\varphi[\kappa-r-\int_{\zeta(E)\setminus\{0\}}K(z,\{|\eta|\leq 1\})\hat{\eta}(z)\mu(dz)]-\frac{1}{2}\alpha \varphi^2\sigma^2\}$ solves a simple quadratic optimization problem of $\mathbb{R}$.

To complete the proof, we need to show that the value process $V^{\phi^*}$ is a martingale for the optimal strategy.  The argument in Chapter~\ref{ChapterHJB} already shows that the process $V^{\phi^*}$ has zero drift and collecting the local martingale parts from equation~\eqref{valueprocessdecomposition}
gives 
\begin{align}
    &\int_0^t\partial_xv(s,X^\phi_{s-})X^\phi_{s-}dM^{\phi}_s+\int_{[0,t]\times E} \left[v\left(s,X^\phi_{s-}\left(1+\phi_s(\zeta(e)) \eta(e)\right)\right)-v(s,X^\phi_{s-})\right.\\&\hspace{4.5cm}\left.-\mathbbm{1}_{\{|\eta|\leq1\}}(e)\partial_xv(s,X^\phi_{s-})X^\phi_{s-}\phi_s(\zeta(e)) \eta(e)\right]\bar{N}(ds,de).
\end{align}
We observe that the pure jump part of $dM^{\phi}_t$ cancels out with the $\bar{N}$-integral of the $\mathbbm{1}_{\{|\eta|\leq1\}}(e)\dots$-term  in the second line above. This yields the local martingale dynamics
\begin{align}\label{dynamics:locmartingale}
dV^{\phi^*}_t
=V^{\phi^*}_{t-}\bigg(&\phi^*(0)\sigma dW_t+\int_{E} \left[\left(1+\phi^*(\zeta(e)) \eta(e)\right)^{1-\alpha}-1\right]\bar{N}(dt,de)\bigg),
\end{align}
where we used the homotheticity of our candidate value function $v$ in its second argument. 
Both remaining terms are independent and we know for constant fraction $\phi^*(0)$, the corresponding exponential w.r.t.\ the Brownian motion is a martingale. 

Concerning, the integral w.r.t.\ the compensated Poisson random measure, we can apply \cite[Lemma 33.6, p.221]{Sato:99} on exponential L{\'e}vy processes to conclude the martingale property of the corresponding exponential, once we verify the  condition
\begin{align}\label{Cond:Sato}
    \int_E ((1+\phi^*(\zeta(e))\eta(e))^{\frac{1-\alpha}{2}}-1)^2\nu(de)<\infty.
\end{align}
This estimate is going to be established in the remainder of this proof by distinguishing a couple of cases.

For the contribution of the large jumps, we estimate
\begin{align}
    \int_{\{\eta>1\}}& ((1+\phi^*(\zeta(e))\eta(e))^{\frac{1-\alpha}{2}}-1)^2\nu(de)\\\leq &\int_{\{\eta>1\}} [(1+\phi^*(\zeta(e))\eta(e))^{1-\alpha}+1]\nu(de)<\infty
\end{align}
which is finite due to \eqref{ConstantM_infiniteact} and~\eqref{etasquareintegrable}.

For the integral over the set $\{|\eta|\leq 1\}$, let us distinguish between jumps yielding no signal and jumps yielding non-zero signal. By Taylor's approximation and square integrability of small jumps~\eqref{etasquareintegrable}, we can conclude as above
\begin{align}
    \int_{\{|\eta|\leq 1\}\cap \{\zeta=0\}}& ((1+\phi^*(0)\eta(e))^{\frac{1-\alpha}{2}}-1)^2\nu(de)\\
    \leq &\int_{\{|\eta|\leq 1\}\cap \{\zeta=0\}} \mathcal{O}(\eta(e)^2)\nu(de)<\infty.
\end{align}
Finally, for small jumps yielding a non-zero signal, we need to distinguish the cases $\{|\phi^*(\zeta)\eta|\leq 1\}$ and $\{\phi^*(\zeta)\eta> 1\}$. For the first one, we can again apply Taylor's approximation and use the square integrability of small jumps to conclude
\begin{align}
    \int_{\{|\eta|\leq 1\}\cap \{|\phi^*(\zeta)\eta|\leq 1\}\cap \{\zeta\neq0\}} ((1+\phi^*(\zeta(e))\eta(e))^{\frac{1-\alpha}{2}}-1)^2\nu(de)<\infty.
\end{align}
For the integration over the set $\{\phi^*(\zeta)\eta> 1\}$, condition \eqref{ConstantM_infiniteact} provides a finite upper bound in the case $\alpha\in(0,1)$ since here
\begin{align}
    &\int_{\{|\eta|\leq 1\}\cap \{\phi^*(\zeta)\eta> 1\}\cap \{\zeta\neq0\}} ((1+\phi^*(\zeta(e))\eta(e))^{\frac{1-\alpha}{2}}-1)^2\nu(de)\\
    &\quad \leq \int_{\{|\eta|\leq 1\}\cap \{\phi^*(\zeta)\eta> 1\}\cap \{\zeta\neq0\}} ((1+\phi^*(\zeta(e))\eta(e))^{1-\alpha}-1)<\infty.
\end{align}
 Whereas for $\alpha>1$, the term $((1+\phi^*(\zeta(e)))\eta(e))^{\frac{1-\alpha}{2}}-1)^2$ is bounded from above by $1$ on  $\{\phi^*(\zeta)\eta>1\}$, and so
 \begin{align}\label{estimate:proofverification}
     \int_{\{|\eta|\leq 1\}\cap \{\phi^*(\zeta)\eta> 1\}\cap \{\zeta\neq0\}}& ((1+\phi^*(\zeta(e))\eta(e))^{\frac{1-\alpha}{2}}-1)^2\nu(de) \leq \nu(A)
\end{align}
where $A:=\{|\eta|\leq 1\}\cap \{\phi^*(\zeta)\eta> 1\}\cap \{\zeta\neq0\}$. To see that $\nu(A)<\infty$, let us note that
\begin{align}
    \nu(A\cap \{|\phi^*(\zeta)(\eta-\hat{\eta}(\zeta))|\leq 1/2\})
    &\leq \nu(\{|\eta|\leq 1\}\cap \{\zeta\neq0\}\cap\{|\phi^*(\zeta)\hat{\eta}(\zeta)|\geq 1/2\}) \\&\leq \int_{\{|\eta|\leq 1, \zeta\not=0\}} 2|\phi^*(\zeta)\hat{\eta}(\zeta)|\nu(de)\\&= 2\int_{\zeta(E)\setminus\{0\}}|\phi^*(z)|K(z,\{|\eta|\leq1\})|\hat{\eta}(z)|\mu(dz)<\infty
\end{align}
is finite because of~\eqref{cond:wealthprocess1}. Similarly, we find for the complementary set
\begin{align}
\nu(A\cap \{|\phi^*(\zeta)(\eta-\hat{\eta}(\zeta))|> 1/2\})&\leq \nu(\{|\phi^*(\zeta)(\eta-\hat{\eta}(\zeta))|> 1/2\}) \\&\leq \int_{\{\zeta\not=0\}}(2\phi^*(\zeta(e))(\eta(e)-\hat{\eta}(\zeta(e)))^2\nu(de)\\
&\leq 4\int_{\{\zeta(E)\setminus\{0\}}\phi^*(z)^2v_{\eta}(z)\mu(dz)<\infty
\end{align}
because of~\eqref{cond:wealthprocess2}. 

With condition~\eqref{Cond:Sato} fully established, this complete our proof.


\end{proof}

\subsection{Gaussian jumps and signals}
Let us assume the risky asset follows log-normal price dynamics as specified by
\begin{align}\label{assetdynamicscasestudy}
S^1_t=s^1\:\exp\Big(\sigma W_t + (\kappa-\frac{1}{2}\sigma^2)t+\sum\limits_{n=1}^{N_t} (\hat{\sigma}Y_n + \hat{\kappa}-\frac{1}{2}\hat{\sigma}^2)\Big),\quad t\in[0,T];
\end{align}
here, $(N_t)_{t\in[0,T]}$ is a standard Poisson process with intensity $\lambda>0$, $(Y_n)_{n\in\mathbb{N}}$ is an independent sequence of i.i.d.\ standard normal random variables and $\hat{\kappa}\in\mathbb{R},\hat{\sigma}>0$. Note that our use of $\kappa$ here deviates from our model specification~\eqref{assetdynamics}. This is for notational convenience and due to the compound Poisson process setting here where we do not have to be concerned about compensators and stochastic integration. All computations henceforth refer to $\kappa$, $\hat{\kappa}$, etc.\ as specified in~\eqref{assetdynamicscasestudy}.

To model our signal, we consider furthermore an independent sequence of i.i.d.\ Bernoulli random variables $(\delta_n)_{n \in \mathbb N}$ with $p=\mathbb P[\delta_n=1]=1-\mathbb P[\delta_n=0] \in [0,1]$ and another independent i.i.d.\ sequence of standard normal random variables  $(\epsilon_n)_{n\in\mathbb{N}}$, and we put
$$Z_t=\sum_{n=1}^{N_t}\delta_n(\rho Y_n+\sqrt{1-\rho^2}\epsilon_n), \quad t \in [0,T].$$
With probability $p$, this signal process will jump together with the compound Poisson process $\sum_{n=1}^{N_.} (\hat{\sigma}Y_n + \hat{\kappa}-\frac{1}{2}\hat{\sigma}^2)$ driving the price process $S^1$ and its jumps have correlation $\rho$ with the price shocks.

In the marked Poisson point process notation from before, this corresponds to choosing the Polish space $E=\mathbb{R}^2\times \{0,1\}$ equipped with the $\sigma$-field $\mathcal{E}=\mathcal{B}(\mathbb{R}^2)\otimes 2^{\{0,1\}}$ and letting $N(dt,du)$ be a Poisson random measure with compensator $\nu=\lambda N(0,1)\otimes N(0,1) \otimes \text{Ber}(p)$; the mappings $\eta,\zeta:E\to\mathbb R$ are given by $\eta(e)=\exp(\hat{\sigma}e_1+\hat{\kappa}-\frac{1}{2}\hat{\sigma}^2)-1$ and $\zeta(e)=e_3(\rho e_1+\sqrt{1-\rho^2}e_2)$ for $e=(e_1,e_2,e_3)\in E$. 

When $|\rho|=1$, the investor knows exactly what stock price jump is about to happen and thus has obvious arbitrage opportunities that render the utility maximization trivial; note also that this violates the no arbitrage condition~\eqref{noarbitragegivensignal}.


For $|\rho|<1$, we find that $\nu$ disintegrates into $\mu(dz)=\lambda p N_{0,1}(dz)$ and $K(z,de)=N_{\rho z,1-\rho^2}(de_1) \otimes \mathrm{Dirac}_{z}(de_2) \otimes \mathrm{Dirac}_1(de_3)$. So, the investor cannot with certainty rule out any open jump range  and for any signal $z\in \mathbb R \setminus \{0\}$ we find
\begin{align}
\Phi(z)=[0,1].
\end{align}
Interestingly, the form of $\Phi(0)$ depends on the signal probability $p$. If $p=1$, the investor can be sure to be warned about any stock price jump and thus can rewind any leveraged position upon receiving a signal and in the meantime choose her investment fraction from all of $\Phi(0)=\mathbb R$. For $p<1$ though, arbitrary jumps can happen at any time and so the no-bankruptcy requirement $X^\phi \geq 0$ reduces the admissible investment choices to $\Phi(0)=[0,1]$. It is now easy to check that $M<\infty$ for the constant $M$ of~\eqref{ConstantM_infiniteact}. 

The optimal portfolio $\phi^*(0)$ at times without signal (i.e. when $\Delta_t Z=0$) is thus given by
\begin{align}\label{phi0example}
\phi^*(0)
&=\argmax\limits_{\varphi\in \Phi(0)} \Big\lbrace(\kappa-r)\varphi-\frac{1}{2}\alpha\sigma^2\varphi^2 \\&\qquad +\lambda\frac{1-p}{1-\alpha}\int_{\mathbb{R}}[(1+\varphi (\exp(\hat{\sigma}x+\hat{\kappa}-\frac{1}{2}\hat{\sigma}^2)-1))^{1-\alpha}-1]N_{0,1}(dx)\Big\rbrace.
\end{align}
Similarly, upon receiving the signal $z \in \mathbb{R}$, the investor will adjust her investment fraction to
\begin{align}
\phi^*(z)&=
\argmax\limits_{\psi\in[0,1]}\frac{1}{1-\alpha}\int_{\mathbb{R}}[(1+\psi (\exp(\hat{\sigma}x+\hat{\kappa}-\frac{1}{2}\hat{\sigma}^2)-1))^{1-\alpha}-1]N_{\rho z,1-\rho^2}(dx).
\end{align} 
Lacking an analytic description of the above maximizers, let us resort to numerical methods to study them in financial-economic terms in the next section.

\subsection{Numerical illustration and financial-economic discussion}
In the following, let us discuss the above example with the parameter choices 
\begin{align}
\alpha=2,\; r=0,\; \kappa=0.08,\; \sigma=0.3,\; \hat{\kappa}=0,\; \hat{\sigma}=0.1\; \text{ and } \lambda=4.    
\end{align} 
Since the optimal strategy cannot be computed explicitly, we will apply numerical integration and optimization methods in our illustrations. 
\begin{figure}[t]
\begin{center}
\includegraphics[scale=0.6]{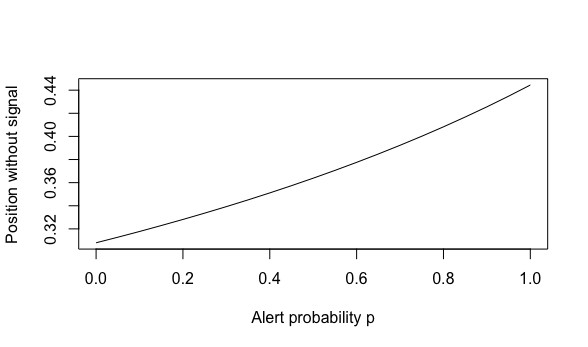}\end{center}
\vspace{-0.2cm}
\caption{The optimal investment fraction~$\phi^*(0)$ at times without signal as it changes with the signal probability~$p$ for risk aversion $\alpha=2$.}\label{fig1}
\end{figure}

Figure~\ref{fig1} describes how the optimal investment fraction with no incoming signal, $\phi^*(0)$, depends on the probability $p$ of receiving a jump alert.  For $p=0$, the investor finds herself in the classical Merton investment problem without jump signals and thus opts for the classical Merton ratio $\phi^{\text{Merton}} \approx 0.31$. Moreover,  we observe that the higher the probability $p$ of being warned of impending price jumps, the more aggressive the investor will be. The reason for this is that, with more alerts on jumps, the investor has more opportunities to hedge against price jumps. 
\begin{figure}[t]
\begin{center}
\includegraphics[scale=0.33]{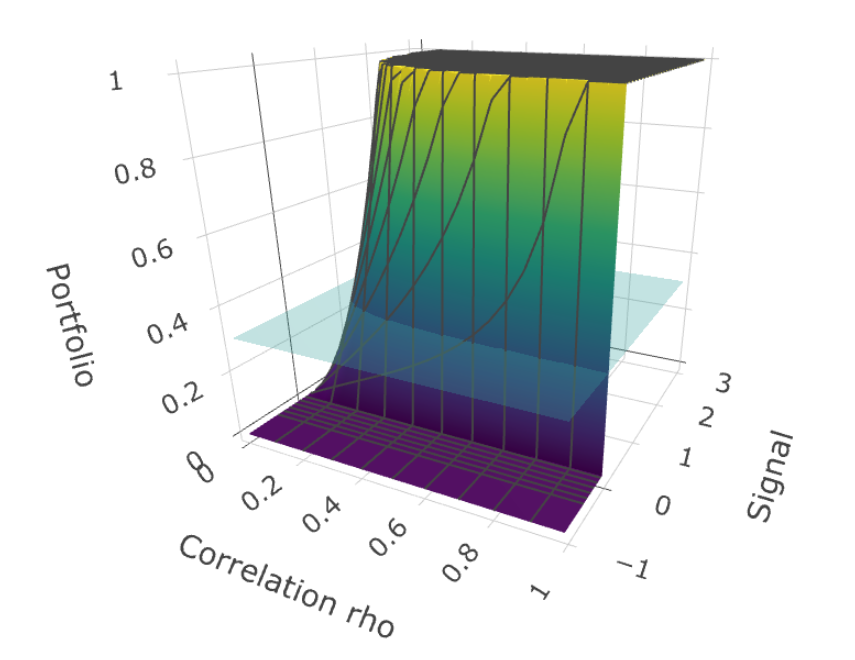}\end{center}
\vspace{-0.2cm}
\caption{The Merton portfolio $\phi^*$ for $p=0$ (transparent plane) and the signal-dependent optimal strategy $\phi^*(z)$ are plotted in dependence of the correlation $\rho$ and the arriving signal $z$ for risk aversion $\alpha=2$.}\label{fig2}
\end{figure}

Figure \ref{fig2} shows how the optimal investment ratio $\phi^*(z)$ depends on the received signal $z$ and on the correlation $\rho$ between signal and price shock; the Merton ratio $\phi^*$ for $p=0$, i.e. without signals, is included as a transparent plane.  As one would expect, the investment ratio $\phi^*(z)$ is non-decreasing in the signal $z$ since we have a positive correlation coefficient $\rho$. Moreover, for  sufficiently positive jump signals, the investor will put all her money into the risky stock. However, for any negative signal, the investor will completely withdraw her funds from the risky stock. Interestingly, a disinvestment can happen also for slightly positive signals, especially for low correlation $\rho$, which is an effect of risk aversion that makes the investor wary of a shock even if it promises to be positive on average. Another observation is that the more reliable the signal, i.e.\ the higher $\rho$, the steeper the investment curve as a function of the signal $z$. Finally, also for a per se completely meaningless signal which has no correlation with the stock price jump ($\rho=0$), the investor still takes action upon receiving a jump alert: she completely disinvests to $\phi^*(z)=0$. The reason for this is that, having learned about the impending shock and aware of its mean $\hat{\kappa}=0$, our risk-averse investor insulates herself for the next moment from any exposure to the stock price shock.
\begin{figure}[t]
\begin{center}
\includegraphics[scale=0.3]{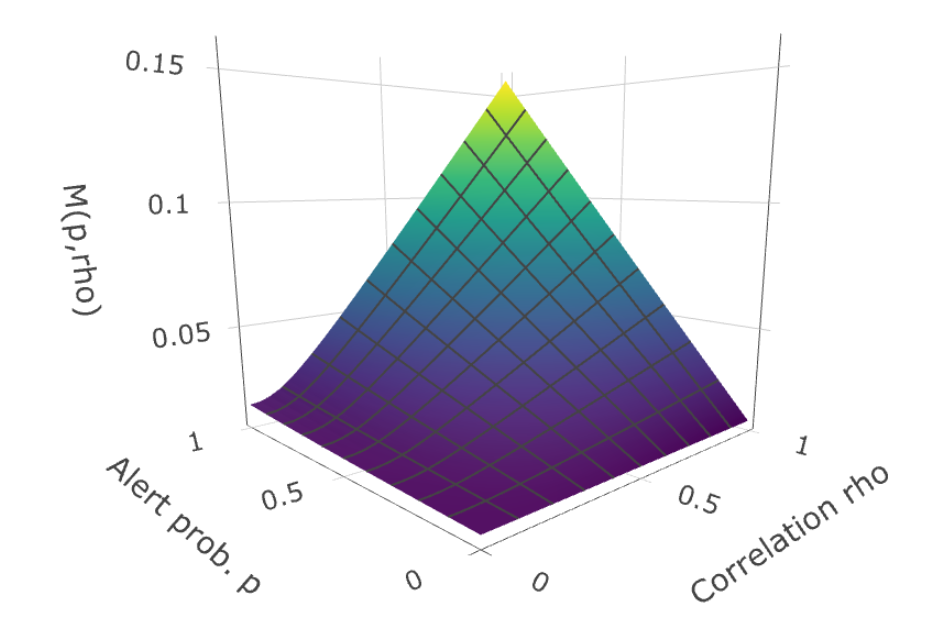}\end{center}
\vspace{-0.2cm}
\caption{The constant $M$ is plotted in dependence of the alert probability $p$ and the correlation $\rho$ for risk aversion $\alpha=2$. }\label{fig3}
\end{figure}

Figure~\ref{fig3} shows the constant $M$ determining the certainty equivalent growth rate~\eqref{ConstantM_infiniteact} as it depends on signal quantity (as represented by the alert probability $p$) and signal quality (as represented by the correlation $\rho$).  Without the chance for a signal, i.e.\ for $p=0$, it of course coincides the growth rate in the usual Merton jump diffusion problem. However, already for a meaningless signal with correlation $\rho=0$, we detect the effect of the hedging opportunity (particularly for large $p$) which is due to the hedging opportunities afforded by jump alerts. More generally, one can conclude that, for the present parameter choice, improving the quantity or the quality of the signal both have pretty similar effects on the expected utility of the investor's strategy: Both affect the growth rate $M$ in an approximate linear way with similar slopes. Hence, when forced to decide whether to improve signal quantity or signal quality, our investor should pursue the cheaper option. 

\begin{figure}[t!]
\includegraphics[scale=0.5]{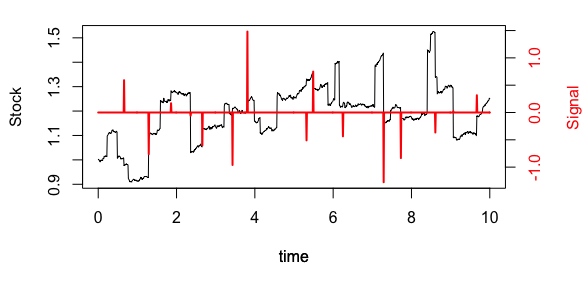}\\ 
\includegraphics[scale=0.5]{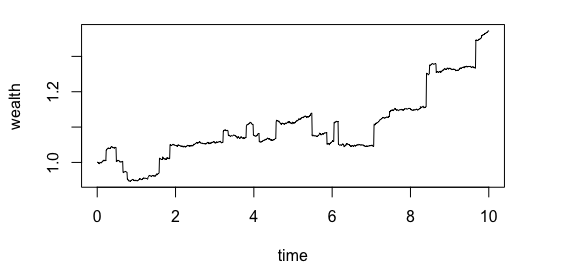}\\ 
\includegraphics[scale=0.5]{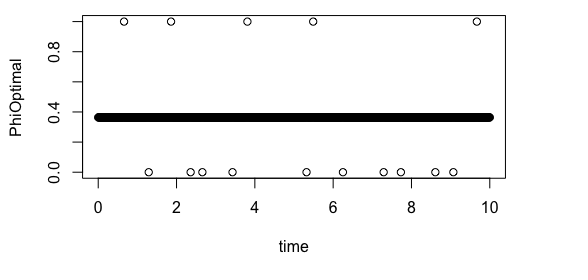}\caption{ Trajectories for stock prices, signals and wealth process arising from optimal investment fractions depicted in the lower panel for $p=0.5$ and $\rho=0.8$ for risk aversion $\alpha=2$.}\label{fig4}
\end{figure}
\begin{figure}[t!]
\includegraphics[scale=0.5]{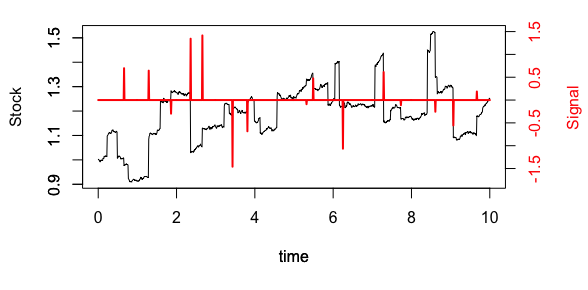}\\ 
\vspace{-0.2cm}
\includegraphics[scale=0.5]{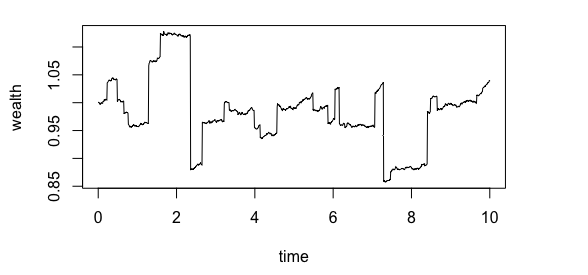}\\ 
\includegraphics[scale=0.5]{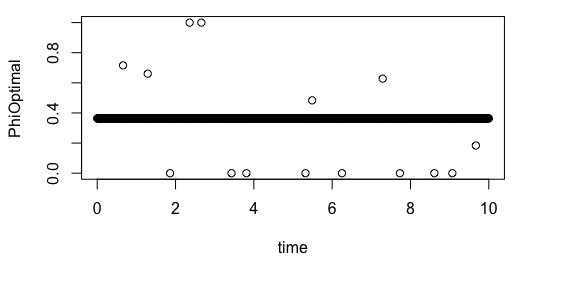}\caption{ Same investment scenario as in figure \ref{fig4} for an investor receiving a less reliable signal with $\rho=0.2$ for risk aversion $\alpha=2$.}\label{fig5}
\end{figure}  

Finally, Figures~\ref{fig4} and~\ref{fig5} illustrate typical paths for the stock price evolution $S^1$, the signals $\Delta Z$ processes and the investor's wealth dynamics $X^{\phi^*}$ along with the corresponding investment ratios $\phi^*(\Delta Z)$, in a high ($\rho=0.8$) and a low ($\rho=0.2$) correlation setting, respectively; the probability of being alerted is the same in both cases ($p=0.5$). For a reliable signal with high correlation to the jump shocks, we find that investment ratios given a signal are typically either $0$ or $1$, reflecting the investor's strong trust in the signal's directional information. For a less reliable signal with low correlation, a more nuanced approach is taken and even upon receiving a positive signal the investor will withdraw some money from the stock if this is not sufficiently high (e.g. at $t\approx 9.5$). 

Of course, even a rather reliable signal ($\rho=0.8$) may produce misleading information, for instance for the upward jump at time $t\approx 1.5$ of which our investor cannot profit as she disinvested in this moment. More often, though, the high quality signal will allow her to make a very high profit, e.g.\ at time $t\approx 9.5$, or to avoid a loss, e.g.\ at $t\approx 7.2$ or $t\approx 7.8$. By contrast, the paths in Figure~\ref{fig5} for a low correlation signal are characterized by two major losses, at $t\approx 2.2$ and $t\approx 7.2$, due to an erroneous ``buy'' signal. On the upside of course, one can observe that the investor does not invest all of her money into the risky stock at the treacherous time $t \approx 7.2$ since she knows that she cannot fully trust her signal. Moreover, she can make a small profit at time $t\approx 9.5$ as she does not withdraw all of her money from the stock, despite having received only a slightly positive signal. 

Finally, let us comment briefly on what changes when our investor becomes more risk averse. All other parameters the same, Figures \ref{fig2_alpha10} and \ref{fig3_alpha10} illustrate for risk aversion $\alpha=10$ the optimal portfolio choices upon receiving a signal as well as the resulting certainty equivalent growth rate $M$ for varying signal quality $\rho$ and quantity $p$.
\begin{figure}[t]
\begin{center}
\includegraphics[scale=0.33]{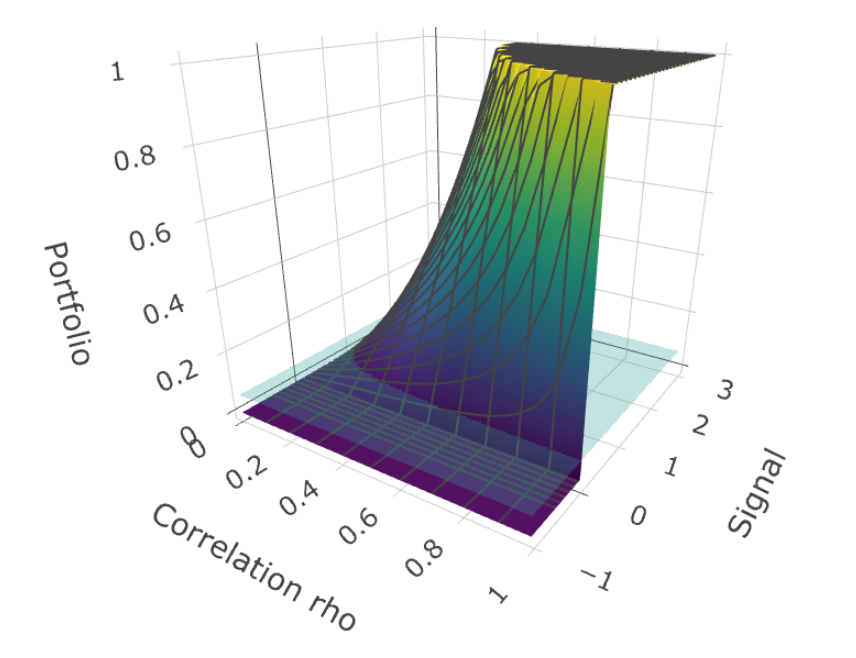}\end{center}
\caption{The Merton portfolio $\phi^*(0)$ for $p=0$ (transparent plane) and the signal-dependent optimal strategy $\phi^*(z)$ are plotted in dependence of the correlation $\rho$ and the arriving signal $z$ in the high risk aversion case $\alpha=10$.}\label{fig2_alpha10}
\end{figure}

 In comparison to the case of $\alpha=2$ in Figure \ref{fig2}, the investment ratio $\phi^*(z)$ is increasing considerably less quickly with the signal as well as with the correlation coefficient $\rho$, a reflection of the investor's aversion for bold bets even when given a signal. As a consequence, the investor still withholds some of her money from the stock even for rather positive signals, and mildly positive signals (let alone negative) ones make her disinvest from the stock more frequently.

\begin{figure}[t]
\begin{center}
\includegraphics[scale=0.3]{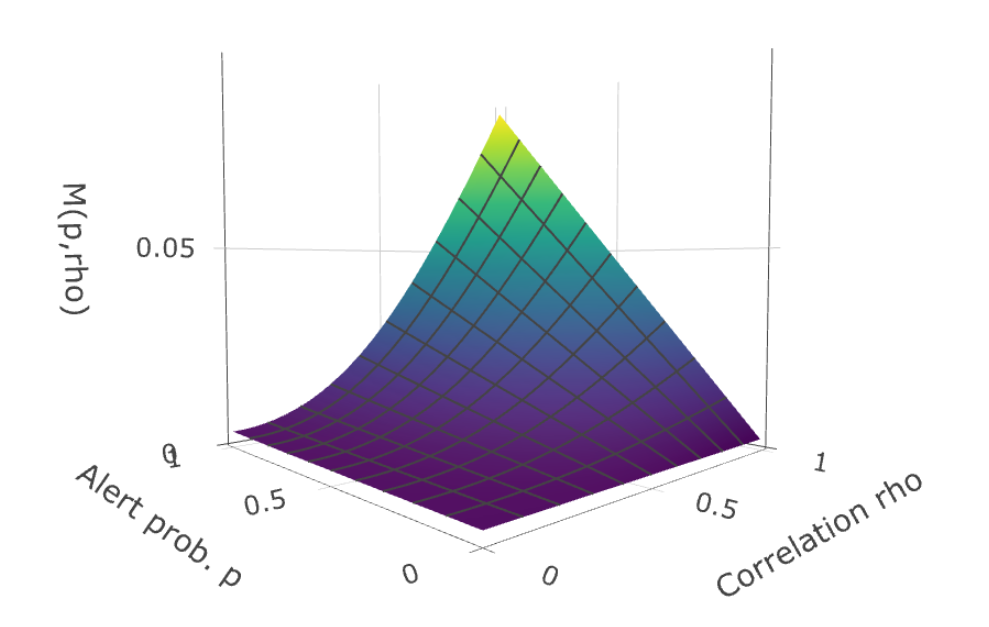}\end{center}
\caption{The constant $M$ is plotted in dependence of the alert probability $p$ and the correlation $\rho$ in the case of high risk aversion $\alpha=10$.}\label{fig3_alpha10}
\end{figure}
For $\alpha=2$ we already noted that improving quantity $p$ or quality $\rho$ of the signal have similar effects on the investor's expected utility. As illustrated by Figure~\ref{fig3_alpha10}, this changes for high risk aversion $\alpha=10$: Here, the increase from changes in signal quantity $p$ remains constant, whereas the effect of changes in $\rho$ exhibits convexity. In other words, we find that the expected marginal utility of improving the signal's quality is essentially constant in the alert probability $p$, but increases with improvements in the signal quality $\rho$. This effect is the more pronounced the higher the alert probability. Thus, a very risk averse investor will assign ever higher value to further improvements of an already reliable signal, whereas opportunities to obtain more signals will not be pursued as vigorously.

\bibliography{BibliographyMerton}

\begin{thebibliography}{10}

\bibitem{AmendingerImkellerSchweizer:98}
J.~Amendinger, P.~Imkeller, and M.~Schweizer.
\newblock Additional logarithmic utility of an insider.
\newblock {\em Stochastic Process. Appl.}, 75(2):263--286, 1998.

\bibitem{Back:92}
K.~Back.
\newblock Insider trading in continuous time.
\newblock {\em Review of Financial Studies}, 5:387--409, 1992.

\bibitem{EkrenEtAl:21}
K.~Back, F.~Cocquemas, I.~Ekren, and A.~Lioui.
\newblock Optimal transport and risk aversion in kyle's model of informed
  trading, 2021.

\bibitem{bankbesslichlenglarts}
P.~Bank and D.~Besslich.
\newblock On lenglart's theory of meyer-sigma-fields and el karoui's theory of
  optimal stopping.
\newblock arXiv e-prints, 2019.

\bibitem{bankbesslichmodelling}
P.~Bank and D.~Besslich.
\newblock Modelling information flows by meyer-$\sigma$-fields in the singular
  stochastic control problem of irreversible investment.
\newblock {\em The Annals of Applied Probability}, 2020.

\bibitem{ElKaroui:81}
N.~El~Karoui.
\newblock Les aspects probabilistes du controle stochastique.
\newblock {\em Hennequin P.L. (eds) Ecole d’Eté de Probabilités de
  Saint-Flour IX-1979. Lecture Notes in Mathematics}, 876, 1981.
\newblock Springer, Berlin, Heidelberg .

\bibitem{Sato:99}
K.~iti Sato.
\newblock {\em Lévy Processes and Infinitely Divisible Distributions}.
\newblock Cambridge University Press, 1999.

\bibitem{jacodshiryaev}
J.~Jacod and A.~N. Shiryaev.
\newblock {\em Limit Theorems for Stochastic Processes}.
\newblock Springer, Berlin Heidelberg, 2nd edition, 2003.

\bibitem{Kyle:85}
A.~S. Kyle.
\newblock Continuous auctions and insider trading.
\newblock {\em Econometrica}, 53:1315--1335, 1985.

\bibitem{Lenglart}
E.~Lenglart.
\newblock Tribus de meyer et th\'eorie des processus.
\newblock {\em S\'eminaire de probabilit\'es de Strasbourg}, 14:500--546, 1980.

\bibitem{Merton71}
R.~C. Merton.
\newblock Optimum consumption and portfolio rules in a continuous-time model.
\newblock {\em Journal of Economic Theory}, 3(4):373--413, 1971.

\bibitem{PikovskyKaratzas:96}
I.~Pikovsky and I.~Karatzas.
\newblock Anticipative portfolio optimization.
\newblock {\em Advances in Applied Probability}, 28(4):1095–1122, 1996.

\bibitem{oksendalsulem}
B.~Øksendal and A.~Sulem.
\newblock {\em Applied Stochastic Control of Jump Diffusions}.
\newblock Springer, 01 2007.

\end{thebibliography}
\bibliographystyle{abbrv}
\end{document}